\theoremstyle{plain} %text of this environment is typesetted in italics
\newtheorem{theorem}{\indent\bf Theorem}[section]
\newtheorem{conj}[theorem]{\indent\bf Conjecture}
\theoremstyle{definition} %text of this environment is typesetted in roman letters
\newtheorem{thm}{Theorem}[section]
\newtheorem{lem}[thm]{Lemma}
\theoremstyle{definition}
\theoremstyle{remark}
\newtheorem{rem}{Remark}[section]
\begin{document}

\title[Demailly's conjecture]{A note on Demailly's transcendental Morse inequalities conjecture}

	\author[Y. Li]{Yinji Li}
\address{Yinji Li:  Institute of Mathematics\\Academy of Mathematics and Systems Sciences\\Chinese Academy of
	Sciences\\Beijing\\100190\\P. R. China}
\email{1141287853@qq.com}
\author[Z. Wang]{Zhiwei Wang}
\address{Zhiwei Wang: Laboratory of Mathematics and Complex Systems (Ministry of Education)\\ School of Mathematical Sciences\\ Beijing Normal University\\ Beijing 100875\\ P. R. China}
\email{zhiwei@bnu.edu.cn}
\author[X. Zhou]{Xiangyu Zhou}
\address{Xiangyu Zhou: Institute of Mathematics\\Academy of Mathematics and Systems Sciences\\and Hua Loo-Keng Key
	Laboratory of Mathematics\\Chinese Academy of
	Sciences\\Beijing\\100190\\P. R. China}
\address{School of
	Mathematical Sciences, University of Chinese Academy of Sciences,
	Beijing 100049, P. R. China}
\email{xyzhou@math.ac.cn}

	\begin{abstract} 	
Let $(X,\omega)$ be an $n$-dimensional  compact Hermitian manifold with $\omega$ a pluriclosed Hermitian metric, i.e. $dd^c\omega=0$. Let $\{\alpha\},\{\beta \}\in H^{1,1}_{BC}(X,\mathbb R)$ be two nef classes, such that $\alpha^n-n\alpha^{n-1}\cdot\beta>0$. In this short note, we prove that if there is a bounded quasi-plurisubharmonic  potential $\rho$, such that $\alpha+dd^c\rho\geq 0$ in the weak sense of currents, then the class $\{\alpha-\beta\}$ contains a K\"ahler current. This gives a partial solution of Demailly's transcendental Morse inequalities conjecture.
% , and improves the results of Popovici \cite{Pop16} and Xiao \cite{Xiao15}.
	\end{abstract}
	
%	
%	\thanks{The first author is supported by the NSFC grant (11901046). The second author is supported by the  Beijing Natural Science Foundation (1202012, Z190003) and by the NSFC grant (12071035).}
%	
	
	\subjclass[2020]{}

	\keywords{}

	\dedicatory{}

\maketitle

%%%%%%%%%%%%%%%%%%%%%%%%%%%%%%%%%%%%%%%%%%%%%%%%%%%%%%%%%%%%
\section{Introduction}

The main concern of this paper is the following Demailly's transcendental Morse inequalities conjecture.

\begin{conj}[see {\cite[Conjecture 10.1 (ii)]{BDPP13}}]\label{conj}
Let $X$ be a compact complex manifold, and $n=\dim X$.
%\begin{itemize}
%	\item [(i)] Let $\alpha$ be a closed, $(1,1)$-form on $X$. We denote by $X(\alpha,\leq 1)$ the set of points $x\in X$ such that $\alpha_x$ has at most one negative eigenvalue. If $\int_{X(\alpha,\leq 1)}\alpha^n>0$, the class $\{\alpha\}$ contains a K\"ahler current and 
%$$Vol(\alpha)\geq \int_{X(\alpha,\leq 1)}\alpha^n.$$
%\item [(ii)] 
Let $\{\alpha\}$ and $\{\beta \}$ be nef cohomology classes of type $(1,1)$ on $X$ satisfying the inequality $\alpha^n-n\alpha^{n-1}\cdot \beta>0$. Then $\{\alpha-\beta \}$ contains a K\"ahler current and $$Vol(\alpha-\beta)\geq \alpha^n-n\alpha^{n-1}\cdot\beta.$$
%	\end{itemize}	
	\end{conj}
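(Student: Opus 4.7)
Since the conjecture is still open in full generality, I will organize the strategy around the Chiose--Popovici--Tosatti--Weinkove approach, under hypotheses for which each step is known or expected to be accessible: the idea is to produce a K\"ahler current in $\{\alpha-\beta\}$ from a bounded singular potential solving a degenerate complex Monge--Amp\`ere equation. First I would regularize. Nefness of $\alpha$ and $\beta$ supplies, for each $\varepsilon>0$, smooth representatives $\alpha_\varepsilon=\alpha+dd^c u_\varepsilon\geq-\varepsilon\omega$ and $\beta_\varepsilon=\beta+dd^c v_\varepsilon\geq-\varepsilon\omega$, and by continuity of the intersection pairing the strict inequality $\alpha_\varepsilon^n-n\alpha_\varepsilon^{n-1}\cdot\beta_\varepsilon>0$ persists for small $\varepsilon$.

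Next I would solve on $(X,\omega)$ a complex Monge--Amp\`ere equation of the shape
\[
(\alpha_\varepsilon+\beta_\varepsilon+dd^c\varphi_\varepsilon)^n=c_\varepsilon\,\omega^n,
\]
with $\varphi_\varepsilon$ a bounded quasi-plurisubharmonic function, $\alpha_\varepsilon+\beta_\varepsilon+dd^c\varphi_\varepsilon\geq 0$, and $c_\varepsilon$ normalized by a volume identity. On a K\"ahler manifold this is Yau's theorem; on a general compact complex manifold it is available via Tosatti--Weinkove's existence theorem precisely when $\omega$ is pluriclosed. The given bounded quasi-psh potential $\rho$ for $\alpha$ provides a singular positive representative of $\{\alpha\}$ into which the Monge--Amp\`ere correction can be absorbed while retaining $L^\infty$ control of the potentials.

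The core of the argument, and what I expect to be the main obstacle, is a mass-concentration step in the spirit of Chiose. One tests the Monge--Amp\`ere equation against $\omega^{n-1}$ and against $(\alpha_\varepsilon+\beta_\varepsilon+dd^c\varphi_\varepsilon)^{n-1}$, using the pluriclosed hypothesis $dd^c\omega=0$ to justify integration by parts between positive $(1,1)$-currents and positive $(n-1,n-1)$-products. A Cauchy--Schwarz-type manipulation, combined with the strict positivity $\alpha^n-n\alpha^{n-1}\cdot\beta>0$, should yield a uniform lower bound $c_\varepsilon\geq c_0>0$ and, after rearrangement, an inequality of the form $\alpha_\varepsilon-\beta_\varepsilon+dd^c\psi_\varepsilon\geq\delta\,\omega$ as currents, with $\delta>0$ independent of $\varepsilon$.

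Finally, weak compactness and the uniform $L^\infty$ bound on the potentials $\rho+\psi_\varepsilon$ produce a limit current $T\in\{\alpha-\beta\}$ satisfying $T\geq\delta\,\omega$, i.e.\ a K\"ahler current. The volume bound $\mathrm{Vol}(\alpha-\beta)\geq\alpha^n-n\alpha^{n-1}\cdot\beta$ would then follow by computing the non-pluripolar Monge--Amp\`ere mass of $T$ and matching it to the intersection numbers produced above. Without the pluriclosed hypothesis one loses the Stokes-type identities underpinning Chiose's inequality, and without a bounded reference potential for $\alpha$ one cannot control the Monge--Amp\`ere masses via Bedford--Taylor calculus; the paper's two additional hypotheses are calibrated precisely to restore both ingredients, so an argument along these lines should deliver the partial result announced in the abstract.
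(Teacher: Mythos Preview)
The paper does not attempt the full conjecture; it proves only the K\"ahler-current assertion (Theorem~1.2) and says nothing about the volume inequality. Your final paragraph on computing non-pluripolar mass to recover $\mathrm{Vol}(\alpha-\beta)\geq\alpha^n-n\alpha^{n-1}\cdot\beta$ is therefore beyond what the paper delivers, and would require a separate argument.

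For the K\"ahler-current part, your route and the paper's diverge at the very first step. The paper does not construct a K\"ahler current directly by regularization and weak limits. Instead it invokes Lamari's duality lemma: $\{\alpha-\beta\}$ contains a K\"ahler current iff $\int_X(\alpha-\beta-\delta\omega)\wedge g^{n-1}\geq 0$ for some fixed $\delta>0$ and every Gauduchon metric $g$. Arguing by contradiction produces a sequence $\delta_j\to 0$ and Gauduchon metrics $g_j$ violating this. The Monge--Amp\`ere equation actually solved is
\[
(\alpha+dd^c\rho+dd^c v_j)^n=c_j\,(\beta+dd^c\tilde\rho_j+\delta_j\omega)\wedge g_j^{n-1},
\]
with the offending Gauduchon form on the right-hand side. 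A Popovici-type integral inequality (Lemma~2.3) then yields $n\int_X(\alpha+dd^c\rho+dd^c v_j)^{n-1}\wedge(\beta+\delta_j\omega)\geq\int_X\alpha^n$, and the pluriclosed condition $dd^c\omega=0$ is used only here, to replace $(\alpha+dd^c\rho+dd^c v_j)^{n-1}$ by $\alpha^{n-1}$ under the integral. Letting $j\to\infty$ contradicts $\alpha^n>n\alpha^{n-1}\cdot\beta$.

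Your outline has a genuine gap at its core. The equation $(\alpha_\varepsilon+\beta_\varepsilon+dd^c\varphi_\varepsilon)^n=c_\varepsilon\,\omega^n$ has the wrong shape: the sum $\alpha_\varepsilon+\beta_\varepsilon$ on the left gives no handle on the difference $\alpha-\beta$, and the right-hand side involves neither $\beta$ nor any Gauduchon form, so the ``Cauchy--Schwarz-type manipulation'' you invoke cannot produce a pointwise inequality $\alpha_\varepsilon-\beta_\varepsilon+dd^c\psi_\varepsilon\geq\delta\omega$. In the Chiose/Xiao/Popovici scheme the Gauduchon metric is essential: it enters the right-hand side of the Monge--Amp\`ere equation precisely so that Lamari's reduction closes the contradiction. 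Without Lamari's lemma your ``rearrangement'' step is the entire difficulty, not a consequence of what precedes it. Finally, a small misattribution: $dd^c\omega=0$ is not what makes the Monge--Amp\`ere equation solvable (Tosatti--Weinkove existence holds on any compact Hermitian manifold); it is used solely in the last integration by parts.
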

This conjecture is known to imply many other important conjectures, including the characterization of the movable cone as the dual of the pseudoeffective cone,
 the $\mathcal C^1$-differentiability of the volume function in the cone of big classes, and the orthogonality of divisorial Zariski decompositions. 
 
 When $X$ is assumed to be K\"ahler,  the class $\{\alpha\}$ is integral and $\{\beta \}=0$,   a stronger version (see \cite[Conjecture 10.1 (i)]{BDPP13}) was proved by Boucksom \cite{Bou02}.
 
 When $X$ is projective, Conjecture \ref{conj} was recently settled by D. Witt Nystr\"om \cite{Nys19}. 
 
 When there is a Hermitian metric $\omega$ with $dd^c\omega^l=0$ for all $l$ (especially when $X$ is K\"ahler), inspired by Chiose \cite{Ch16}, Xiao \cite{Xiao15} proved a weak version of  the   Conjecture \ref{conj}, say  $\{\alpha-\beta \}$ contains a K\"ahler current, if $\alpha^n-4n\alpha^{n-1}\cdot \beta>0$. Shortly later, following Xiao's method with a clever estimate in the Monge-Amp\`ere equation,  the Conjecture \ref{conj}  was proved by Popovici \cite{Pop16} under the same hypothesis as Xiao on the Hermitian metric $\omega$. For more related work on volume and self-intersection of diffenreces of two nef classes, see \cite{Pop17}.
 
 More recently, Guedj-Lu \cite{GL22} introduced an interesting quantity $v_+(\omega)$ for  a Hermitian metric $\omega$ on $X$ as 
 \begin{align*}
 	v_+(\omega):=\sup\left\{ \int_X(\omega+dd^c\varphi)^n:  \varphi\in \mbox{PSH}(X,\omega)\cap L^\infty(X)  \right\}.
 	\end{align*}
 They proved in \cite{GL22} that on a compact Hermitian manifold $(X,\omega)$, if $v_+(\omega)<+\infty$, then the Conjecture \ref{conj} holds true.

 In this short note, based on our previous work on degenerate complex Monge-Amp\`ere equations \cite{LWZ23}, we can relax the assumption on the Hermitian metric $\omega$, by establishing the following
 
\begin{thm}\label{thm:main}
	Let $(X,\omega)$ be a compact Hermitian manifold, with $\omega$ a pluriclosed Hermitian metric, i.e. $dd^c\omega=0$. Let $\{\alpha\},\{\beta\}\in H^{(1,1)}(X,\mathbb{R})$ be real (1,1)-classes such that $\alpha$ has  a bounded quasi-plurisubharmonic  potential $\rho$, such that $\alpha+dd^c\rho\geq 0$ in the weak sense of currents, and $\{\beta\}$ is nef.  If    $\alpha^n\textgreater n\alpha^{n-1}\cdot\beta$, then $\{\alpha-\beta\}$ contains a K\"ahler current.
\end{thm}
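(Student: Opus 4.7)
The plan is to argue by contradiction, following the Chiose--Xiao--Popovici--Guedj--Lu strategy and invoking the degenerate complex Monge--Amp\`ere machinery of our previous work \cite{LWZ23} in place of the classical smooth Calabi--Yau equation. Suppose for contradiction that $\{\alpha-\beta\}$ contains no K\"ahler current. Writing $\theta := \alpha + dd^c\rho \geq 0$ for the given positive $(1,1)$-current with bounded potential, and using the nefness of $\{\beta\}$, I would choose for each $\varepsilon > 0$ a smooth representative $\beta_\varepsilon \in \{\beta\}$ with $\tilde\beta_\varepsilon := \beta_\varepsilon + \varepsilon\omega > 0$.

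I would then apply the existence theorem of \cite{LWZ23} to produce a bounded $\theta$-plurisubharmonic function $\varphi_\varepsilon$ (normalized by $\sup_X\varphi_\varepsilon = 0$) and a constant $c_\varepsilon > 0$ solving the degenerate complex Monge--Amp\`ere equation
\begin{equation*}
(\theta + dd^c\varphi_\varepsilon)^n = c_\varepsilon\,\tilde\beta_\varepsilon^{\,n}.
\end{equation*}
Setting $M_\varepsilon := \theta + dd^c\varphi_\varepsilon$, the pointwise AM--GM inequality applied to simultaneous eigenvalues of $M_\varepsilon$ and $\tilde\beta_\varepsilon$ yields $M_\varepsilon^{n-1}\wedge\tilde\beta_\varepsilon \geq c_\varepsilon^{(n-1)/n}\tilde\beta_\varepsilon^{\,n}$ in the Bedford--Taylor sense. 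Integrating over $X$ and using the equation gives
\begin{equation*}
c_\varepsilon^{1/n}\!\int_X M_\varepsilon^{n-1}\wedge\tilde\beta_\varepsilon \;\geq\; \int_X M_\varepsilon^{\,n}.
\end{equation*}

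The pluriclosedness assumption $dd^c\omega = 0$ then enters at the level of cohomological identification: combined with the boundedness of the potentials $\rho,\varphi_\varepsilon$ and the integration-by-parts machinery of \cite{LWZ23}, it allows me to identify the total and mixed Monge--Amp\`ere masses asymptotically with the Bott--Chern intersection numbers,
\begin{equation*}
\int_X M_\varepsilon^{\,n} = \alpha^n + o(1),\qquad \int_X M_\varepsilon^{n-1}\wedge\tilde\beta_\varepsilon = \alpha^{n-1}\cdot\beta + o(1),
\end{equation*}
as $\varepsilon\to 0$. Plugging these into the previous inequality and invoking the hypothesis $\alpha^n > n\alpha^{n-1}\cdot\beta$ forces $c_\varepsilon^{1/n} \geq \alpha^n/(\alpha^{n-1}\cdot\beta) - o(1) > n - o(1)$, so that $c_\varepsilon \geq c_0 > 1$ uniformly for $\varepsilon$ small enough.

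To finish, the uniform lower bound $c_\varepsilon \geq c_0 > 1$ enables the standard Chiose/Popovici extraction procedure: weak compactness of bounded quasi-plurisubharmonic functions (modulo additive constants) produces a limiting potential from which one builds a K\"ahler current in $\{\alpha-\beta\}$, contradicting the standing assumption. I expect the chief technical obstacle to be the cohomological identification of the mixed Monge--Amp\`ere masses in the third paragraph: on a pluriclosed but non-K\"ahler manifold the higher wedge powers of $\omega$ need not be $dd^c$-closed, and carrying out the requisite integration-by-parts in the presence of merely bounded potentials $\rho$ and $\varphi_\varepsilon$ is precisely where the degenerate Monge--Amp\`ere machinery of \cite{LWZ23} provides the decisive input in place of the stronger hypothesis $v_+(\omega)<+\infty$ used by Guedj--Lu.
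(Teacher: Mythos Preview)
Your argument has a genuine gap at the last step. Observe that the contradiction hypothesis ``$\{\alpha-\beta\}$ contains no K\"ahler current'' is never actually invoked in deriving $c_\varepsilon\geq c_0$: the Monge--Amp\`ere equation, the AM--GM inequality, and the cohomological identifications (which, incidentally, are straightforward here---since $\alpha,\beta$ are closed and only a single factor of $\omega$ ever appears, $dd^c\omega=0$ already suffices; this is not the obstacle) all proceed unconditionally from $\alpha^n>n\,\alpha^{n-1}\cdot\beta$. So you are not heading toward a contradiction; you are proposing to \emph{construct} a K\"ahler current in $\{\alpha-\beta\}$ directly from the bound on $c_\varepsilon$. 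There is, however, no ``standard Chiose/Popovici extraction procedure'' that does this. A weak limit of the $\varphi_\varepsilon$ gives at best a closed positive current in $\{\alpha\}$, not in $\{\alpha-\beta\}$, and the pointwise bound $M_\varepsilon^{\,n}\geq c_0\,\tilde\beta_\varepsilon^{\,n}$ says nothing about positivity of $M_\varepsilon-\tilde\beta_\varepsilon$ (a large product of eigenvalues does not force each eigenvalue to exceed $1$). The contradiction hypothesis is simply left unused.

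The paper's proof is structured differently from the outset. It begins with Lamari's duality: if $\{\alpha-\beta\}$ contains no K\"ahler current, there exist $\delta_j\to 0$ and Gauduchon metrics $g_j$ with $\int_X\alpha\wedge g_j^{n-1}\leq\int_X(\beta+\delta_j\omega)\wedge g_j^{n-1}$. The Gauduchon forms $G_j:=g_j^{n-1}$ are then placed on the \emph{right-hand side} of the degenerate Monge--Amp\`ere equation: one solves $(\alpha+dd^c\rho+dd^c v_j)^n=c_j\,\tilde\beta_j\wedge G_j$ via \cite[Theorem~1.1]{LWZ23}, not $c_\varepsilon\,\tilde\beta_\varepsilon^{\,n}$. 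The singular Popovici estimate \cite[Lemma~6.5]{LWZ23} then gives
\[
\Bigl(\int_X\alpha\wedge G_j\Bigr)\Bigl(\int_X M_j^{\,n-1}\wedge\tilde\beta_j\Bigr)\;\geq\;\frac{1}{n}\Bigl(\int_X\alpha^n\Bigr)\int_X\tilde\beta_j\wedge G_j,
\]
and it is precisely the Lamari inequality $\int_X\alpha\wedge G_j\leq\int_X\tilde\beta_j\wedge G_j$ that allows one to cancel and obtain $n\int_X M_j^{\,n-1}\wedge\tilde\beta_j\geq\int_X\alpha^n$. The pluriclosed condition identifies the left side with $n\int_X\alpha^{n-1}\wedge(\beta+\delta_j\omega)$, yielding the numerical contradiction $\alpha^n\leq n\,\alpha^{n-1}\cdot\beta$ in the limit---with no compactness or extraction step whatsoever. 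The ingredient missing from your sketch is Lamari's lemma and the specific role the resulting Gauduchon metrics play inside the Monge--Amp\`ere equation.
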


\begin{rem}In \cite{GL22}, it was pointed out that  when $\dim X\leq 2$ or $\dim X=3$ and $dd^c\omega=0$,  $v_+(\omega)<+\infty$. Thus our Theorem \ref{thm:main} is new in $\dim X\geq 4$.
	\end{rem}

A pluri-closed Hermitian metric is also called Strong KT metric in many literature, we refer to a nice survey paper by Fino and Tomassini \cite{FT09}.

The method of  proof is inspired by Popovici \cite{Pop16}.  It is  based on  Lamari's lemma,  solutions to degenerate complex Monge-Amp\`ere equations \cite[Theorem 1.1]{LWZ23}, and   a generalization of Popovici's estimates in the Monge-Amp\`ere equation to the singular setting \cite[Lemma 6.5]{LWZ23}.

\subsection*{Acknowledgements} 
This research is supported by National Key R\&D Program of China (No. 2021YFA1002600, No. 2021YFA1003100).  The authors are partially supported respectively by NSFC grants (12071035, 11688101).

%%%%%%%%%%%%%%%%%%%%%%%%%%%%%%%
\section{Proof of Theorem \ref{thm:main}}
In this section, we give the proof of Theorem \ref{thm:main}. The following lemma  is due to Lamari.
\begin{lem}[\cite{Lam1}]\label{lem: lamari}
	Let $X$ be a compact Hermitian manifold, and let $\alpha$ be a smooth real (1,1)-form, there exists a distribution $\psi$ on X such that $\alpha+dd^c\psi\geq0$ if and only if:
	
	$$\int_X\alpha \wedge \gamma^{n-1}\geq0$$
	for any Gauduchon metric $\gamma$ on X.
\end{lem}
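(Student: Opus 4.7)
The plan is to prove the equivalence in two directions: the forward direction (existence of $\psi$ implies the integral inequality) by integration by parts using the Gauduchon condition $dd^c\gamma^{n-1}=0$; the reverse direction by a Hahn--Banach separation argument in the space of real $(1,1)$-currents on $X$.

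For the forward direction, assume $\alpha+dd^c\psi\geq 0$ in the sense of currents. Given a Gauduchon metric $\gamma$, the smooth form $\gamma^{n-1}$ is strongly positive, so pairing with $\gamma^{n-1}$ gives
$$
\int_X(\alpha+dd^c\psi)\wedge\gamma^{n-1}\geq 0.
$$
Regularizing $\psi$ by convolution in local charts and passing to the limit, one has $\int_X dd^c\psi\wedge\gamma^{n-1}=\int_X\psi\,dd^c\gamma^{n-1}=0$, so $\int_X\alpha\wedge\gamma^{n-1}\geq 0$.

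For the reverse direction, I would argue by contradiction. Assume the integral condition holds and consider the convex cone
$$
\mathcal{A}=\bigl\{T+dd^c\psi:\ T\geq 0\text{ a real }(1,1)\text{-current on }X,\ \psi\in\mathcal{D}'(X,\mathbb{R})\bigr\}
$$
in the topological dual of the Fréchet space of smooth real $(n-1,n-1)$-forms. Assuming for contradiction that $\alpha\notin\mathcal{A}$, and that $\mathcal{A}$ is weak-$*$ closed (see below), Hahn--Banach produces a smooth real $(n-1,n-1)$-form $\theta$ such that $\int_X\alpha\wedge\theta<0$ while $\int_X S\wedge\theta\geq 0$ for every $S\in\mathcal{A}$. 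Taking $S=T$ with $T\geq 0$ arbitrary forces $\theta$ to be weakly positive; taking $S=\pm dd^c\psi$ for arbitrary distributions $\psi$ forces $dd^c\theta=0$. Fixing a Gauduchon metric $\omega_0$ on $X$ (whose existence is Gauduchon's theorem) and setting $\theta_\varepsilon=\theta+\varepsilon\omega_0^{n-1}$ gives a strictly positive pluriclosed $(n-1,n-1)$-form. By Michelsohn's theorem, $\theta_\varepsilon=\gamma_\varepsilon^{n-1}$ for a unique Hermitian metric $\gamma_\varepsilon$, which is automatically Gauduchon. For $\varepsilon>0$ small enough, $\int_X\alpha\wedge\gamma_\varepsilon^{n-1}<0$, contradicting the hypothesis.

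The main obstacle is verifying that $\mathcal{A}$ is weak-$*$ closed so that strict Hahn--Banach separation applies; this reduces to the closedness of the range of $dd^c$ on distributions, which follows from standard elliptic theory but is the technical heart of the argument. A secondary but essential ingredient is Michelsohn's identification of strictly positive smooth $(n-1,n-1)$-forms with $(n-1)$-th powers of Hermitian metrics, without which one could not realize the abstract separating form $\theta$ concretely as $\gamma^{n-1}$ for a Gauduchon metric violating the assumed inequality.
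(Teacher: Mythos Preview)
The paper does not prove this lemma: it is quoted from \cite{Lam1} and used as a black box, so there is no ``paper's own proof'' to compare against. That said, your sketch is exactly the standard argument (Lamari's original one, also reproduced e.g.\ in various surveys): the forward direction is immediate from the Gauduchon condition and integration by parts, and the reverse direction is the Hahn--Banach/Michelsohn argument you outline. Your identification of the two technical ingredients---closedness of the cone $\mathcal{A}$ (equivalently, closed range of $dd^c$ on distributions, which follows from ellipticity and compactness of $X$) and Michelsohn's $(n-1)$-st root theorem---is accurate, and the perturbation $\theta_\varepsilon=\theta+\varepsilon\omega_0^{n-1}$ by a fixed Gauduchon power to upgrade weak positivity to strict positivity while preserving $dd^c$-closedness is the correct maneuver. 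One minor tightening: in the forward direction you invoke regularization of $\psi$ to justify the integration by parts, but since $\gamma^{n-1}$ is smooth and $dd^c$-closed, the pairing $\langle dd^c\psi,\gamma^{n-1}\rangle=\langle\psi,dd^c\gamma^{n-1}\rangle=0$ is just the definition of $dd^c$ on distributions and needs no approximation.
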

The following solution of degenerate complex Monge-Amp\`ere equations is a key ingredient in the proof.

\begin{thm}[{\cite[Theorem 1.1]{LWZ23}}]\label{thm:CMA}Let $(X,\omega)$ be a compact Hermitian manifold  of complex dimension $n$.  Let $\beta$ be a smooth real $(1,1)$-form on $X$ such that there exists  $\rho\in \mbox{PSH}(X,\beta)\cap L^\infty(X)$. Let $0\leq f\in L^p(X,\omega^n)$, $p>1$, be such that $\int_Xf\omega^n=\int_X\beta^n>0$. Then there exists a unique  real-valued function $\varphi\in \mbox{PSH}(X,\beta )\cap L^\infty(X)$, satisfying
	\[(\beta+dd^c\varphi)^n=f\omega^n\]
	in the weak sense of currents, and $\|\varphi\|_{L^\infty(X)}\leq C(X,\omega,\beta, M,\|f\|_p)$.
\end{thm}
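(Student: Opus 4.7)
The strategy is a perturbation-compactness argument: approximate the degenerate equation by a family of non-degenerate Hermitian Monge-Amp\`ere problems, establish a uniform $L^\infty$ bound on the approximate solutions, pass to the weak limit by pluripotential continuity, and derive uniqueness from a comparison principle for bounded $\beta$-psh solutions.

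For the approximation, I would regularize $\rho$ by a decreasing sequence of smooth functions $\rho_\varepsilon \searrow \rho$ via Demailly's Hermitian smoothing, losing only a uniformly bounded amount of positivity $\beta + dd^c\rho_\varepsilon \ge -A\omega$ (possible because $\rho$ bounded forces all Lelong numbers to vanish). Then
\[\omega_\varepsilon := \beta + dd^c\rho_\varepsilon + (A+\varepsilon)\omega\]
is a smooth Hermitian metric for each $\varepsilon > 0$. Choose in parallel smooth positive $f_\varepsilon \to f$ in $L^p(X,\omega^n)$. By the Tosatti--Weinkove existence theorem for the non-degenerate Hermitian Monge-Amp\`ere equation, for every $\varepsilon$ there is a smooth $u_\varepsilon$ with $\sup_X u_\varepsilon = 0$ and a unique constant $b_\varepsilon > 0$ with
\[(\omega_\varepsilon + dd^c u_\varepsilon)^n = b_\varepsilon f_\varepsilon \omega^n.\]

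The crux is a uniform bound $\|u_\varepsilon\|_{L^\infty(X)} \le C$ where $C$ depends only on $X, \omega, \beta, M := \|\rho\|_\infty$ and $\|f\|_p$. Note that $\varphi_\varepsilon := u_\varepsilon + \rho_\varepsilon$ is $(\beta + (A+\varepsilon)\omega)$-psh with $\|\rho_\varepsilon\|_\infty \le M + o(1)$, so the bound reduces to a Ko\l{}odziej-type estimate for a Hermitian Monge-Amp\`ere equation with $L^p$ density. The necessary refinement in the non-K\"ahler setting absorbs torsion contributions from $d\omega \neq 0$ into the capacity side of the key inequality, using the bounded potential $\rho$ as an intrinsic gauge that controls the mass of sublevel sets. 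The constants satisfy $b_\varepsilon \to 1$ by $\int_X \omega_\varepsilon^n \to \int_X (\beta + A\omega)^n$ via dominated convergence, compared against $\int_X f\omega^n = \int_X\beta^n$ after tracking the $A\omega$ shift. Once this bound is established, standard pluripotential compactness extracts an $L^1$-limit $\varphi := \lim \varphi_\varepsilon \in \mbox{PSH}(X,\beta) \cap L^\infty(X)$, and Bedford--Taylor continuity of Monge-Amp\`ere operators along uniformly bounded sequences of quasi-psh potentials yields $(\beta+dd^c\varphi)^n = f\omega^n$ in the weak sense. Uniqueness of bounded solutions is then obtained by the comparison principle on sublevel sets $\{\varphi_1 < \varphi_2 + \eta\}$ of two solutions, exploiting the $L^p$-integrability of the right-hand side.

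The main obstacle is precisely the uniform $L^\infty$ estimate. Because $\beta$ need only admit a bounded $\beta$-psh potential and may have no smooth semi-positive representative in its class, and because $\omega$ is not assumed pluriclosed in this theorem, neither the Aubin--Yau moment method nor the original Ko\l{}odziej capacity argument applies verbatim. The delicate analytic input is a Hermitian-manifold capacity estimate in which $\|\rho\|_\infty$ is used as the compensating geometric data and the torsion error terms are absorbed into a modified capacity without sacrificing the quantitative comparison with $\|f\|_p$; this is the technical centerpiece of \cite[Theorem 1.1]{LWZ23}.
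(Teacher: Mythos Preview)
The paper does not contain a proof of this theorem at all: it is quoted verbatim from \cite[Theorem~1.1]{LWZ23} as a tool and is used as a black box in the proof of Theorem~\ref{thm:main}. There is therefore no ``paper's own proof'' to compare your proposal against.

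That said, a brief comment on your outline. The overall architecture (regularize $\rho$, solve non-degenerate Hermitian Monge--Amp\`ere equations \`a la Tosatti--Weinkove, extract a uniform $L^\infty$ bound, pass to the limit by Bedford--Taylor continuity, conclude uniqueness by comparison) is the expected strategy and is in the spirit of \cite{LWZ23}. However, one step is not justified as written: your claim that the normalizing constants $b_\varepsilon \to 1$ ``by $\int_X \omega_\varepsilon^n \to \int_X(\beta+A\omega)^n$ via dominated convergence'' presupposes that $\int_X(\omega_\varepsilon+dd^c u_\varepsilon)^n=\int_X\omega_\varepsilon^n$, which is false on a general Hermitian manifold since $d\omega\neq 0$ and Stokes' theorem does not eliminate the $dd^c u_\varepsilon$ contributions. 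On Hermitian manifolds the constant $b_\varepsilon$ is determined implicitly by the equation, not by a volume-matching identity, and one needs independent two-sided a priori bounds on $b_\varepsilon$ (uniform in $\varepsilon$) before the limit can be taken. This is part of the genuine analytic work in \cite{LWZ23}, alongside the uniform $L^\infty$ estimate you correctly flag as the main difficulty.
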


Combining Theorem \ref{thm:CMA} and Bedford-Taylor theory \cite{BT82}, the authors get a generalization of  Popovici's estimates in the Monge-Amp\`ere equation to the singular setting.
\begin{lem}[{\cite[Lemma 6.5]{LWZ23}}]\label{lem:LWZ}
Let $X$ be an $n$-dimensional compact Hermitian manifold with a Gauduchon metric $g$. Set $G=g^{n-1}$.	Let $\alpha$ be a smooth closed  $(1,1)$-form such that there exists a bounded quasi-plurisubharmonic function $\rho$ such that $\alpha+dd^c\rho\geq 0$ in the weak sense of currents. Let $\omega$ be a Hermitian metric on $X$. Then for any bounded quasi-plurisubharmonic function $v$, and $c>0$, solving the following complex Monge-Amp\`ere equation 
$$(\alpha+dd^c\rho+dd^cv)^n=c\omega\wedge G,$$
it holds that 
\begin{align*}
	& \left(\int_X(\alpha+dd^c\rho+dd^cv)\wedge G\right)\cdot\left(\int_X(\alpha+dd^c\rho+dd^cv)^{n-1}\wedge \omega\right) \\
&\geq\frac{c}{n}\left(\int_X\omega\wedge G\right)^2.
\end{align*}
	\end{lem}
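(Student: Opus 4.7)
My plan is to reduce the integral inequality to a pointwise algebraic inequality via Cauchy--Schwarz, establish the pointwise inequality by simultaneously diagonalizing $T$ and $\omega$ in local coordinates and applying Cauchy--Schwarz once more, and extend from the smooth setting to the case of bounded quasi-psh potentials via Bedford--Taylor approximation.

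Write $T:=\alpha+dd^c\rho+dd^c v$ and let $a,b,h,t$ denote the densities of the $(n,n)$-forms $T\wedge G$, $T^{n-1}\wedge\omega$, $\omega\wedge G$, $T^n$ with respect to $\omega^n$. The Monge--Amp\`ere equation reads $t=ch$. A first application of Cauchy--Schwarz on $(X,\omega^n)$ gives
\[
 \int_X T\wedge G\cdot\int_X T^{n-1}\wedge\omega
 =\int_X a\,\omega^n\cdot\int_X b\,\omega^n
 \geq\Bigl(\int_X\sqrt{ab}\,\omega^n\Bigr)^2,
\]
so the lemma will follow from the pointwise bound $ab\geq (c/n)h^2$, equivalently
\[
 (T\wedge G)\,(T^{n-1}\wedge\omega)\geq\tfrac{1}{n}\,T^n\,(\omega\wedge G).
\]
Assume first that $v$ is smooth with $T>0$. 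At each point, choose unitary coordinates simultaneously diagonalizing $\omega$ and $T$, so that $\omega=i\sum_j dz_j\wedge d\bar z_j$ and $T=i\sum_j\lambda_j\,dz_j\wedge d\bar z_j$ with $\lambda_j>0$, and let $M_k$ be the $k$-th principal $(n{-}1)\times(n{-}1)$ minor of the Hermitian matrix of $g$ in these coordinates. A direct calculation from the expansion $g^{n-1}/(n-1)!=i^{(n-1)^2}\sum_{|I|=|J|=n-1}\det(g^{I,J})\,dz_I\wedge d\bar z_J$ yields
\[
 a=\tfrac{1}{n}\sum_k\lambda_k M_k,\quad b=\tfrac{1}{n}e_{n-1}(\lambda),\quad h=\tfrac{1}{n}\sum_k M_k,\quad t=\prod_j\lambda_j.
\]
Using $e_{n-1}(\lambda)=(\prod_j\lambda_j)(\sum_k\lambda_k^{-1})$, the pointwise inequality reduces to
\[
 \Bigl(\sum_k\lambda_k M_k\Bigr)\Bigl(\sum_k\lambda_k^{-1}\Bigr)\geq\sum_k M_k,
\]
which is Cauchy--Schwarz followed by the trivial bound $(\sum_k\sqrt{M_k})^2\geq\sum_k M_k$ (each $M_k\geq0$ being a principal minor of a positive Hermitian matrix).

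To handle the general case where $\rho,v$ are only bounded quasi-psh, I would approximate the bounded $\alpha$-psh potential $\psi:=\rho+v$ by a decreasing sequence $\psi_\epsilon$ of smooth quasi-psh functions via Demailly's regularization, apply the smooth-case inequality to the strictly positive perturbation $T_\epsilon:=\alpha+dd^c\psi_\epsilon+C\epsilon\,\omega$, and pass to the limit using the continuity of mixed Bedford--Taylor wedge products under bounded decreasing sequences of psh functions. The main technical obstacle lies precisely here: Demailly's regularization preserves positivity only up to an $O(\epsilon)$ error, and the Monge--Amp\`ere equation is not satisfied exactly by $T_\epsilon$. One remedies this either by re-solving a slightly perturbed Monge--Amp\`ere equation via Theorem~\ref{thm:CMA} to produce smooth data satisfying the exact equation with a nearby constant $c_\epsilon\to c$, or by working directly with Bedford--Taylor wedge products of the bounded potentials and exploiting the continuity of each of the four integrals along bounded decreasing sequences to pass to the limit.
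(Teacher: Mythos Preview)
The paper does not supply a proof of this lemma: it is quoted from \cite{LWZ23}, with only the remark that it is obtained by ``combining Theorem~\ref{thm:CMA} and Bedford--Taylor theory'' as a singular extension of Popovici's estimate. Your argument in the smooth case is exactly Popovici's --- the simultaneous diagonalisation, the identity $e_{n-1}(\lambda)=\bigl(\prod_j\lambda_j\bigr)\sum_k\lambda_k^{-1}$, and the two applications of Cauchy--Schwarz are all correct --- so at the level of strategy you are aligned with what the paper indicates.

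The genuine gap, which you yourself flag, is the passage to bounded potentials, and neither of your proposed remedies closes it as written. Your chain of inequalities runs through the intermediate quantity $\int_X\sqrt{ab}\,\omega^n$, and this is where both options break. For remedy~(b): the four integrals $\int T_\epsilon\wedge G$, $\int T_\epsilon^{n-1}\wedge\omega$, $\int T_\epsilon^n$, $\int\omega\wedge G$ do converge along a Demailly regularisation, but the smooth approximants $T_\epsilon$ no longer satisfy the Monge--Amp\`ere equation, so your pointwise bound gives only $a_\epsilon b_\epsilon\ge\frac{1}{n}t_\epsilon h$; to conclude you would need $\liminf_\epsilon\int\sqrt{t_\epsilon h}\,\omega^n\ge\sqrt{c}\int h\,\omega^n$, and weak convergence $T_\epsilon^n\rightharpoonup c\,\omega\wedge G$ does not deliver this (Jensen for the concave function $\sqrt{\cdot}$ points the wrong way). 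Note also that for merely bounded potentials the measures $T\wedge G$ and $T^{n-1}\wedge\omega$ need not be absolutely continuous with respect to $\omega^n$, so the ``densities'' $a,b$ may not exist as functions at all. For remedy~(a): Theorem~\ref{thm:CMA} again yields only bounded solutions, so re-solving does not return you to the smooth setting; invoking a smooth Hermitian Calabi--Yau theorem instead would require a smooth strictly positive reference form and then a stability argument showing the auxiliary smooth solutions converge back to the given $v$ strongly enough to control $\int T_\epsilon^{n-1}\wedge\omega$ --- none of which is automatic. In short, the singular extension needs a substantive additional argument from \cite{LWZ23} that your outline does not yet contain.
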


%
%\begin{thm}
%	Let $(X,\omega)$ be a compact Hermitian manifold, with $\omega$ a pluriclosed Hermitian metric, i.e. $dd^c\omega=0$. Let $\alpha,\beta\in H^{(1,1)}(X,\mathbb{R})$ be real (1,1)-classes such that $\alpha$ has bounded potential and $\beta$ is nef.  If   $\alpha^n\textgreater n\alpha^{n-1}\cdot\beta$, then $\{\alpha-\beta\}$ contains a K\"ahler current.
%\end{thm}

\begin{proof}[Proof of Theorem \ref{thm:main}]
	
By Lamari's lemma (Lemma \ref{lem: lamari}), 	 the class  $\{\alpha-\beta\}$  contains a  K\"ahler current if and only if there exists a $\delta>0$, such that
	\begin{align*}
\int_X\alpha^n=\int_X(\alpha+dd^c\rho)\wedge g^{n-1}\geq\int_X(\beta+\delta\omega)\wedge g^{n-1},
\end{align*}
for every Gauduchon metric $g$ on $X$.	

Now we prove Theorem \ref{thm:main} by contradiction.

Suppose to the contrary  that $\{\alpha-\beta\}$ does not contain any K\"ahler current,  there is a sequence of positive numbers $\{\delta_j\}$ decreases to $0$, and for any $j\in\mathbb{N^*}$,  there is a Gauduchon metric $g_j$ such that
	\begin{align*}
	\int_X(\alpha+dd^c\rho)\wedge g_j^{n-1}\leq \int_X(\beta+\delta_j\omega)\wedge g_j^{n-1}.
	\end{align*}
Due to the nefness of $\{\beta\}$, 	 for every $j$, there exists $\tilde{\rho_j}\in C^{\infty}(X)$ such that $\beta+dd^c\tilde{\rho_j}+\delta_j\omega$ is a Hermitian form on $X$. Set $G_j:=g_j^{n-1}$, by Theorem \ref{thm:CMA}, we can solve the following complex Monge-Amp\`ere equations:
	\begin{align*}
	(\alpha+dd^c\rho+dd^cv_j)^n=c_j(\beta+dd^c\tilde{\rho_j}+\delta_j\omega)\wedge G_j, \ v_j\in\mbox{PSH}(X,\alpha+dd^c\rho)\cap L^{\infty}(X).
	\end{align*}
	It is obvious that
	\begin{align*}
	c_j=\frac{\int_X\alpha^n}{\int_X(\beta+\delta_j\omega)\wedge G_j}.
	\end{align*}
	By Lemma \ref{lem:LWZ},  we get that
	\begin{align*}
	& \left(\int_X(\alpha+dd^c\rho+dd^cv_j)\wedge G_j\right)\cdot\left(\int_X(\alpha+dd^c\rho+dd^cv_j)^{n-1}\wedge (\beta+dd^c\tilde{\rho_j}+\delta_j\omega)\right) \\
	&\geq\frac{c_j}{n}\left(\int_X(\beta+dd^c\tilde{\rho_j}+\delta_j\omega)\wedge G_j\right)^2.
	\end{align*}
	This gives that
	\begin{align*}
	n\int_X(\alpha+dd^c\rho+dd^cv_j)^{n-1}\wedge (\beta+dd^c\tilde{\rho_j}+\delta_j\omega)\geq \int_X\alpha^n.
	\end{align*}
	However, $dd^c\omega=0$ leads to $\int_X(\alpha+dd^c\rho+dd^cv_j)^{n-1}\wedge (\beta+dd^c\tilde{\rho_j}+\delta_j\omega)=\int_X\alpha^{n-1}\wedge (\beta+\delta_j\omega)$. Letting  $j\rightarrow\infty$, we get that 
	\begin{align*}
\int_X\alpha^n\leq 	n\int_Xa^{n-1}\wedge\beta,
	\end{align*}
	which contradicts with the assumption that 
	$$\alpha^n- n\alpha^{n-1}\cdot\beta\textgreater0.$$
	The proof is complete.
\end{proof}

%%%%%%%%%%%%%%%%%%%%%%%%%%%%%%%%%%%%%%%%%%%%%%%%%%%%%%%%%%%%%

%%%%%%%%%%%%%%%%%%%%%%%%%%%%%%%%%%%%%%%%%%%%%%%%%%%%%%%%%%%%%

%%%%%%%%%%%%%%%%%%%%%%%%%%%%%%%%%%%%%%%%%%%%%%%%%%%%%%%%%%%%%%%%%%%%%%%%%%%%%%%%%%%%%%%%%%%%%%%%%%%%%%%%%%%%%%%%%%%5


\begin{thebibliography}{100}

\bibitem[BT82]{BT82}	
E. Bedford and B.A. Taylor, A new capacity for plurisubharmonic functions, Acta Math. \bf149\rm ~(1982) 1--40.	


\bibitem[Bou02]{Bou02} 
S. Boucksom, 
On the volume of a line bundle, 
Intern. J. Math. Vol. 13(2002), 1043-1063.

\bibitem[BDPP13]{BDPP13}
S. Boucksom, J.-P. Demailly, M. P\u aun and T. Peternell, 
The pseudo-effective cone of a compact K\"ahler manifold and varieties of negative Kodaira dimension, 
J. Algebraic Geometry, \bf 22\rm (2013), 201--248.




\bibitem[Ch16]{Ch16}
I. Chiose, The K\"ahler rank of compact complex manifolds, J. Geom. Anal. \bf26\rm (2016), 603--615.		


\bibitem[FT09]{FT09}
A. Fino and A. Tomassini, 
A survey on strong KT structures, 
 Bull. Math. Soc. Sci. Math. Roumanie, Tome 52 (100) no.2, 2009, 99--116.
 
 \bibitem[GL22]{GL22}
 V. Guedj and H.C. Lu, Quasi-plurisubharmonic envelopes 2: bounds on Monge-Amp\`ere volumes,  Alg. Geom. \bf 9\rm (2022), no. 6, 688--713.
 

\bibitem[Lam1]{Lam1}
A. Lamari, Courants k\"ahl\'eriens et surfaces compactes, Ann. Inst. Fourier (Grenoble)  \bf49\rm  ~ (1999) 263--285.



\bibitem[LWZ23]{LWZ23}
Y. Li, Z. Wang and X. Zhou, Degenerate complex Monge-Amp\`ere type equations on compact Hermitian manifolds and applications, arXiv:2305.17955.


%\bibitem[Lam2]{Lam2}
%A. Lamari, Le c\^one K\"ahl\'erien d'une surface. J. Math. Pures Appl. (9)\bf78\rm (3) (1999) 249--263.

\bibitem[Nys19]{Nys19}
D. Witt Nystr\"om, Duality between the pseudoeffective and the movable cone on a projective manifold. With an appendix by S\'ebastien Boucksom.
J. Amer. Math. Soc. \bf 32\rm (2019), no. 3, 675--689.

%
%  \bibitem[Dem12]{Dem12}
%J.-P. Demailly,
%\newblock {\em Analytic methods in algebraic geometry}, volume~1 of {\em
%	Surveys of Modern Mathematics}.
%\newblock International Press, Somerville, MA; Higher Education Press, Beijing,
%2012.


%\bibitem[DP04]{DP04}
%J.-P. Demailly, M. P\u{a}un,
%Numerical characterization of the K\"ahler cone of a compact K\"ahler manifold,
%Annals of Mathematics, 159 (2004), 1247-1274.
%
%\bibitem[DGZ16]{DGZ16}
%S.~Dinew, V.~Guedj, and A.~Zeriahi,
%\newblock Open problems in pluripotential theory.
%\newblock {\em Complex Var. Elliptic Equ.}, 61(7):902--930, 2016.


\bibitem[Pop16]{Pop16}
D. Popovici, Sufficient bigness criterion for differences of two nef classes,
Math. Ann. \bf364\rm (2016), no. 1--2, 649--655.

\bibitem[Pop17]{Pop17}
D. Popovici, Volume and self-intersection of differences of two nef classes,
Ann. Sc. Norm. Super. Pisa Cl. Sci. (5) \bf17\rm (2017), no. 4, 1255--1299.


\bibitem[Xiao15]{Xiao15}
J. Xiao, Weak transcendental holomorphic Morse inequalities on compact K\"ahler manifolds, 
Ann. Inst. Fourier (Grenoble)\bf65\rm (2015), 1367--1379.


\end{thebibliography}
\end{document}